\def \R {\mathbb{R}}
\def \supp {\mathrm{supp } }
\def \dist {\mathrm{dist}}
\def \suchthat {\ \big | \ }
\def \Leb {\mathscr{L}^n}
\newtheorem{theorem}{Theorem}[section]
\newtheorem{lemma}[theorem]{Lemma}
\newtheorem{proposition}[theorem]{Proposition}
\newtheorem{corollary}[theorem]{Corollary}
\newtheorem{remark}[theorem]{Remark}
\numberwithin{equation}{section}
\title[The infinity obstacle problem]{Optimal regularity at the free boundary\\ for the infinity obstacle problem}
\author[Rossi, Teixeira and Urbano]{J.D. Rossi, E.V. Teixeira and J.M. Urbano}
\begin{document}

\begin{abstract}
This paper deals with the obstacle problem for the infinity Laplacian. The main results are a characterization of the solution through comparison with cones that lie above the obstacle and the sharp $C^{1,\frac{1}{3}}$--regularity of the solutions at the free boundary.
\end{abstract}

\date{\today}

\keywords{Obstacle problem; infinity Laplacian; free boundary; optimal regularity}

\subjclass[2010]{35B65, 35R35, 35J70}

\maketitle

\section{Introduction}

The regularity of infinity harmonic functions is an outstanding issue in the theory of nonlinear partial differential equations. The belief that viscosity solutions of $\Delta_\infty u =0$ are of class $C^{1,\frac{1}{3}}$ has hitherto remained unproven despite some recent exciting developments. The flatland example of Aronsson
$$u(x,y) = |x|^{\frac{4}{3}} - |y|^{\frac{4}{3}}$$
sets the framework to what can be expected: the first derivatives of $u$ are H\"older continuous with exponent $1/3$, whereas its second derivatives do not exist on the lines $x = 0$ and $y = 0$. The sharpest results to date are due to Evans and Savin, who prove in \cite{Evans_Savin} that infinity harmonic functions in the plane are of class $C^{1,\alpha}$, building upon Savin's breakthrough in \cite{Savin} (the optimal $\alpha$ remains unknown even in 2-D), and to Evans and Smart, who recently obtained in \cite{ES} the everywhere differentiability, irrespective of the dimension.

This paper addresses the obstacle problem for the infinity Laplacian (see \cite{Ju, LM2}) and its most striking results concern the behaviour at the free boundary. We prove, for the zero obstacle problem, that the solution leaves the ground as a $C^{1,\frac{1}{3}}$--function and that this regularity is optimal. The sharp estimates we derive are yet another conspicuous hint towards the optimal regularity for infinity harmonic functions.

As in a number of subfields in the analysis of PDEs, the study of obstacle problems evolved through two parallel paths, namely the variational and the non-variational theories. The former takes into account energy considerations and is driven by elliptic operators in divergence form, while the latter deals with operators in non-divergence form.
In turn, the infinity Laplacian
$$\Delta_\infty u = \sum_{i,j=1}^{d} \frac{\partial u}{\partial x_i} \frac{\partial u}{\partial x_j} \frac{\partial^2 u}{\partial x_i \partial x_j}$$
enjoys a sort of duality character. On the one hand,  it is a genuine degenerate elliptic operator in non-divergence form but, on the other hand,  $\infty$-harmonic functions can be obtained as limits of $p$-harmonic functions, which are solutions to a divergence form equation.  This operator is connected with the optimal Lipschitz extension problem \cite{J}, random tug-of-war games \cite{BEJ, PSSW}, mass transportation problems \cite{GAMPR} and several other applications \cite{cms,MRU}.

\medskip

The variational approach to the obstacle problem for elliptic operators has been extensively studied. The classical setting amounts at minimizing the energy
$$E(u) = \int_\Omega |D u|^2$$
among the functions that coincide with a given function $F$ at the boundary of $\Omega \subset \R^d$ and remain above a prescribed obstacle $\Psi$. Such a problem is motivated by the description of the equilibrium position of a membrane (the graph of the solution) attached at level $F$ along the boundary of $\Omega$ and that is forced to remain above the obstacle in the interior of $\Omega$. The same mathematical framework appears in many other contexts: fluid filtration in porous media, elasto-plasticity, optimal control or financial mathematics, to name just a few.  In the section \ref{S. limit p-obs}, we explore the ``limiting divergence structure" of the infinity Laplacian to introduce the infinity obstacle problem and obtain a solution $u_\infty$, passing to the limit, as $p \to \infty$, in a sequence of solutions $u_p$ to the obstacle problem for the $p$-Laplacian. With the aim of gaining some insight on the problem, a radially symmetric explicit example is studied in an appendix. We then deal with characterizations of the limit. We first show that $u_\infty$ is the smallest infinity superharmonic function in $\Omega$ that is above the obstacle and equals $F$ on the boundary, a result that implies its uniqueness. Then we establish a sort of comparison with cones that lie above the obstacle. This characterization is interesting in its own right but it also implies a regularity result at the free boundary, a warm-up for what will come later. The section closes with the analysis of the behaviour at infinity of the coincidence sets for the $p$-obstacle problem and its relation with the coincidence set of the limiting problem.

\medskip

The heart of the paper is section \ref{batfb}, where the zero-obstacle type problem that views $\Delta_\infty$ as a degenerate elliptic operator in non-divergence form is studied. We establish the optimal asymptotic profile near the free boundary, showing the solution behaves as a $C^{1, \frac{1}{3}}$--function. We use this sharp information to deduce the uniform positive density of the non-coincidence set. In particular, the free boundary does not develop cusps pointing inwards to the coincidence set.

\section{The variational $\infty$--obstacle problem and characterizations of the limit} \label{S. limit p-obs}

Let $\Omega \subset \mathbb{R}^d$ be a bounded smooth domain, $F$ a Lipschitz function on $\partial \Omega$
and $1< p <\infty$. Given an obstacle $\Psi \colon
\overline{\Omega} \to \mathbb{R}$, with
\begin{equation} \label{obst_vs_bdry}
    \sup\limits_{\partial \Omega} \Psi < \inf\limits_{\partial \Omega}  F,
\end{equation}
the $p$-degenerate obstacle problem for $\Psi$ refers to the minimization problem
\begin{equation}\label{p.minimos}
   \text{Min} \left \{ \int_\Omega |D v(x)|^p dx \suchthat
   v \in W^{1,p}_F \text{ and }  v \ge \Psi \right \}.
\end{equation}
Here $W^{1,p}_F$ means the set of functions in $W^{1,p} (\Omega)$
with trace $F$ on $\partial \Omega$.

Simple soft functional analysis arguments assure that
\eqref{p.minimos} has a unique solution $u_p$.  Let $z$ be a
Lipschitz extension of $F$ such that $z\geq \Psi$ (for the proof
of the existence of such $z$ see Proposition \ref{coimbra}). Since
$z$ competes in the minimization problem
\eqref{p.minimos} for every $p$, we have
$$ \left(\int_\Omega |D u_p |^p\right)^{1/p}  \leq L |\Omega|^{1/p},$$
where $L:=\| D z \|_{L^\infty (\Omega)}$.
For a fixed $q$ and $p \geq q$, we can write
$$
    \left(\int_\Omega |D u_p |^q\right)^{1/q} \leq
    \left(\int_\Omega |D u_p |^p\right)^{1/p}
    |\Omega|^{\frac{p-q}{pq}} \leq L |\Omega|^{1/p} |\Omega|^{\frac{p-q}{pq}} = L |\Omega|^{1/q}.
$$
Hence, we have a uniform bound for the sequence $(u_p)$ in every
$W^{1,q}(\Omega)$. Taking the limit as $p\to \infty$, we conclude that there exists a
function $u_\infty$ such that, up to a subsequence, $u_p \to u_\infty$, locally uniformly in $\overline{\Omega}$ and weakly in every
$W^{1,q}(\Omega)$. Clearly, $u_\infty \geq \Psi$ pointwise. Also,
$$
\left(\int_\Omega |D u_\infty |^q\right)^{1/q}
\leq L |\Omega|^{\frac{1}{q}} \qquad \forall q > 1.
$$
We then conclude that $u_\infty$ is a Lipschitz function, with
$$
    \| D u_\infty \|_{L^\infty (\Omega)} \leq L .
$$
Since this holds being $L$ the $L^\infty$-norm of the gradient of any extension
of $F$ that is above $\Psi$, we conclude that $u_\infty$ is a
solution of the minimization problem (cf. \cite{Ju})
\begin{equation}\label{infty.minimos}
    \min_{w|_{\partial \Omega} =F ; \ w\geq \Psi \mbox{ in } \Omega } \| D w \|_{L^\infty (\Omega)}.
\end{equation}

The minimizers $u_p$ are weak, and hence viscosity, solutions (see
\cite{JLM}) of the following obstacle problem:
$$
\left \{
\begin{array}{llll}
u_p (x) &=& F (x)  & \mbox{on } \partial \Omega, \\
u_p (x) &\geq& \Psi (x)  & \mbox{in } \Omega, \\
\displaystyle - \Delta_p u_p &=& 0  & \mbox{in }
\Omega \setminus A_p := \{u_p >\Psi\}, \\
\displaystyle - \Delta_p u_p  &\geq&  0  & \mbox{in }
\Omega.
\end{array}
\right.
$$

Concerning the PDE problem satisfied by $u_\infty$, we verify that it
is a viscosity solution to the obstacle problem for the infinity Laplacian:

$$
\left \{
\begin{array}{llll}
u_\infty (x) &=& F (x)  & \mbox{on } \partial \Omega, \\
u_\infty (x) &\geq& \Psi (x)  & \mbox{in } \Omega, \\
\displaystyle - \Delta_\infty u_\infty  &=& 0  & \mbox{in }
\Omega \setminus A_\infty = \{u_\infty >\Psi\}, \\
\displaystyle - \Delta_\infty u_\infty  &\geq& 0  & \mbox{in }
\Omega.
\end{array}
\right.
$$

\noindent Indeed, fix a point $y$  in  the set $\{ u_\infty > \Psi \}$. From the uniform
convergence, $u_p >\Psi$  in a neighbourhood of $y$, provided $p
\gg 1$.  Hence, taking the limit as $p\to \infty$ in the
viscosity sense, we obtain
$$
    -\Delta_\infty u_\infty =0 \quad \mbox{ in } \{ u_\infty > \Psi\}.
$$
Moreover, a uniform limit $u_\infty$ verifies
$$
    -\Delta_\infty u_\infty \geq 0 \quad \mbox{ in } \Omega
$$
in the viscosity sense, since this holds for every $u_p$.
Let us remark that the limit obtained here does not necessarily coincide with the solution of the infinity   obstacle problem obtained by direct methods in \cite{BCF}.
\medskip

A crucial issue, with striking implications, is to characterize
the limit $u_\infty$. We give two characterizations, one involving
supersolutions of the infinity Laplacian, the other making use of
appropriately defined cones. From both we will derive important
properties of the limit.

\begin{theorem} \label{char1}
The limit $u_\infty$ is the smallest continuous infinity superharmonic
function in $\Omega$ that is above the obstacle and
equals $F$ on the boundary.
\end{theorem}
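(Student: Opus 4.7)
The plan is to verify that $u_\infty$ itself belongs to the class of infinity superharmonic functions sitting above $\Psi$ with boundary data $F$, and then to show that it minimizes pointwise in that class. The first part is already established in Section 2: we have $u_\infty = F$ on $\partial \Omega$, $u_\infty \ge \Psi$ in $\Omega$, and $-\Delta_\infty u_\infty \ge 0$ in $\Omega$ in the viscosity sense. So the only substantive step is the minimality.

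To prove minimality, fix any competitor $v$ which is infinity superharmonic in $\Omega$, satisfies $v \ge \Psi$ in $\Omega$, and has $v = F$ on $\partial \Omega$; the goal is to show $u_\infty \le v$ throughout $\Omega$. On the coincidence set $A_\infty = \{u_\infty = \Psi\}$ this is immediate, since $u_\infty = \Psi \le v$. It therefore suffices to argue on the open set $U := \{u_\infty > \Psi\}$, where we have already observed that $u_\infty$ is infinity harmonic.

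The key step is then the comparison principle for $\Delta_\infty$ on $U$: $u_\infty$ is a subsolution there, while $v$ is a supersolution inherited from its infinity superharmonicity in all of $\Omega$. On the relative boundary $\partial U$ one decomposes $\partial U = (\partial U \cap \partial\Omega) \cup (\partial U \cap \Omega)$. On the first piece, $u_\infty = F = v$. On the second piece, continuity of $u_\infty$ forces $u_\infty = \Psi$ there, so again $u_\infty \le v$. Jensen's comparison principle for infinity harmonic functions then yields $u_\infty \le v$ throughout $U$, finishing the argument and, as a bonus, giving uniqueness of the smallest element.

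The main technical point will be applying the comparison principle in a form suited to the (possibly irregular) set $U$ and to a competitor $v$ that is only assumed to be a viscosity supersolution; in particular, one has to argue that the inequality on $\partial U$ really does propagate inward despite the free boundary piece of $\partial U$ being an \emph{a priori} arbitrary relatively closed subset of $\Omega$. This is, however, by now a standard matter for viscosity solutions of $\Delta_\infty$, and the dichotomy between the two pieces of $\partial U$ above is exactly what makes the boundary comparison trivially available.
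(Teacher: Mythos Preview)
Your proof is correct and follows essentially the same strategy as the paper's: both reduce to Jensen's comparison principle for $\Delta_\infty$ on an open set where $u_\infty$ is infinity harmonic against an infinity superharmonic competitor. The only cosmetic difference is that the paper first passes to $v_\infty := \inf_{\mathcal{F}} v$ and argues by contradiction on the set $W = \{u_\infty > v_\infty\}$ (where equality on $\partial W$ is automatic), whereas you compare $u_\infty$ directly with an arbitrary $v$ on $U = \{u_\infty > \Psi\}$ and check the inequality on the two pieces of $\partial U$.
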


\begin{proof}

Let $\mathcal{F}$ be the set of all continuous functions $v$ that are
infinity super\-harmonic in $\Omega$ and satisfy
$v \geq \Psi$ in $\Omega$ and $v = F$ on $\partial \Omega$. This set is not
empty because $u_\infty \in \mathcal{F}$. Let
$$v_\infty := \inf_{v \in \mathcal{F}} v,$$
which is upper semicontinuous (as it is the infimum of continuous functions)
and infinity superharmonic in $\Omega$.
Since  $u_\infty \in \mathcal{F}$, it is obvious that
$$u_\infty \geq v_\infty \ \mbox{ in } \ \overline{\Omega}.
$$

Now, define the open set
$$ W= \left\{ x \in \Omega :  u_\infty (x) > v_\infty (x) \right\}. $$
On $\partial W \subset \overline{\Omega}$, we have $v_\infty = u_\infty$. Moreover,
$$u_\infty > v_\infty \geq \Psi \  \mbox{ in } \ W$$
so $W \subset \{u_\infty > \Psi\}$ and $u_\infty$ is infinity harmonic in $W$.
Thus, by the comparison principle,
$$u_\infty \leq v_\infty \ \mbox{ in }\ W, $$
a contradiction that shows that $W=\emptyset$. Consequently, $u_\infty \equiv v_\infty$.
\end{proof}

\begin{corollary}
The limit $u_\infty$ is unique.
\end{corollary}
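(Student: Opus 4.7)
The plan is to deduce the corollary as an immediate consequence of the characterization in Theorem \ref{char1}. Recall that $u_\infty$ was constructed by extracting a subsequence from $(u_p)$ converging locally uniformly; a priori, different subsequences could produce different limits. The point is that Theorem \ref{char1} identifies \emph{any} such subsequential limit with the canonical object
\[
v_\infty := \inf_{v \in \mathcal{F}} v,
\]
which is defined intrinsically and without reference to the particular subsequence chosen.

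Concretely, I would argue as follows. Suppose $u_{p_k} \to u_\infty^{(1)}$ and $u_{p_j} \to u_\infty^{(2)}$ along two subsequences. The compactness and lower-semicontinuity arguments used to produce $u_\infty$ in Section~2 apply verbatim to each subsequence, so both $u_\infty^{(1)}$ and $u_\infty^{(2)}$ satisfy all the hypotheses of Theorem \ref{char1}: each is infinity superharmonic in $\Omega$, lies above $\Psi$, equals $F$ on $\partial \Omega$, and is the smallest such function. Applying Theorem \ref{char1} to both, we conclude
\[
u_\infty^{(1)} \equiv v_\infty \equiv u_\infty^{(2)}.
\]
Hence the subsequential limit is independent of the chosen subsequence.

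A standard subsequence-of-subsequences argument then upgrades convergence of a subsequence to convergence of the full family: every subsequence of $(u_p)$ admits a further subsequence converging to the unique limit $v_\infty$, so $u_p \to v_\infty$ locally uniformly in $\overline{\Omega}$ (and weakly in every $W^{1,q}(\Omega)$) as $p \to \infty$. There is no serious obstacle in this argument, since all the analytic work has already been carried out in the construction of $u_\infty$ and in the proof of Theorem \ref{char1}; the corollary is essentially a uniqueness statement for the variational object $v_\infty$.
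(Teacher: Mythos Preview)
Your proof is correct and, like the paper's, deduces uniqueness directly from Theorem~\ref{char1}. The only difference is cosmetic: you identify each subsequential limit with the intrinsic infimum $v_\infty$ from the proof of Theorem~\ref{char1}, whereas the paper takes two limits $u_{1,\infty}, u_{2,\infty}$, observes that $v := u_{1,\infty} \wedge u_{2,\infty}$ lies in $\mathcal{F}$, and uses the minimality in the \emph{statement} of Theorem~\ref{char1} to get $u_{i,\infty} \le v \le u_{i,\infty}$. Your route is slightly more direct; the paper's avoids reaching back into the proof of the theorem.
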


\begin{proof}
Suppose we have two limits, say $u_{1,\infty}$ and $u_{2,\infty}$. Then
$$v = u_{1,\infty} \wedge u_{2,\infty}$$
is also an infinity superharmonic function in $\Omega$ that is above the obstacle and
equals $F$ on the boundary. By the theorem, we have
$$u_{i,\infty} \leq v , \ \ i=1,2$$
and since, trivially, $v  \leq u_{i,\infty}$,  $i=1,2$, we conclude  that
$$u_{1,\infty}=v=u_{2,\infty}.$$
\end{proof}

Let's now turn to our second characterization of the limit. For
this, consider the family of cones with vertex at a boundary point
and positive opening, which lie above both the obstacle and the
boundary data. For more on comparison with cones and the characterization of infinity harmonic functions see \cite{C}.

To be concrete, for $y \in \partial \Omega$ and $b=(b_1,b_2)$, with $b_1 \geq 0$, we consider the cones
$$ K_{y}^b (x) = b_1 |x-y| + b_2$$
such that
$$K_{y}^b (x) \geq F(x), \qquad x \in \partial \Omega$$
and
$$K_{y}^b (x) \geq \Psi (x), \qquad x \in \Omega.$$
Note that, since the vertex of the cone is at the boundary of $\Omega$, these
cones are infinity harmonic in $\Omega$, that is, $-\Delta_\infty
K_{y}^b =0$ in $\Omega$. We denote by ${\mathcal K}$ the family of all such cones.

Now, we define
$$K_\infty (x) := \inf_{{\mathcal K}} K_{y}^b (x), \quad x \in \overline{\Omega}.$$
It is obvious that
$$K_{\infty} (x) \geq F(x), \quad x \in \partial \Omega$$
and
$$K_{\infty} (x) \geq \Psi (x), \quad x \in \Omega.$$

\begin{proposition} \label{coimbra}
The function $K_\infty$ is Lipschitz continuous in $\overline{\Omega}$
and infinity superharmonic in $\Omega$. Moreover,
$$K_\infty (y) = F(y), \quad y \in \partial \Omega.$$
\end{proposition}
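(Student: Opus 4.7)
The plan is to realize $K_\infty$ as the infimum of a uniformly Lipschitz subfamily of $\mathcal{K}$. Every admissible cone $K_y^b \in \mathcal{K}$ with vertex $y \in \partial\Omega$ satisfies $b_2 = K_y^b(y) \geq F(y)$; for a fixed vertex $y$, the value $b_1|x-y|+b_2$ is pointwise minimized by taking $b_2 = F(y)$ and $b_1$ as small as admissibility permits. This singles out the \emph{optimal vertex-$y$ cone} $\widetilde K_y(x) := b^*(y)\,|x-y| + F(y)$, where
$$
b^*(y) := \max\!\left\{0,\;\sup_{x\in\partial\Omega\setminus\{y\}}\frac{F(x)-F(y)}{|x-y|},\;\sup_{x\in\Omega\setminus\{y\}}\frac{\Psi(x)-F(y)}{|x-y|}\right\}.
$$
By construction $\widetilde K_y\in\mathcal K$, and pointwise on $\overline\Omega$ one has $K_\infty(x) = \inf_{y\in\partial\Omega}\widetilde K_y(x)$.

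The main step is to bound $b^*(y)$ uniformly in $y\in\partial\Omega$. The first supremum is at most the Lipschitz constant $L_F$ of $F$ on $\partial\Omega$. For the second, the standing hypothesis \eqref{obst_vs_bdry} earns its keep: it furnishes a uniform gap $\delta_0 > 0$ with $\Psi(y) - F(y) \leq -\delta_0$ on $\partial\Omega$. Combined with the uniform continuity of $\Psi$ on $\overline\Omega$, one splits the supremum into two regimes -- for $x$ close to $y$ the gap absorbs the oscillation of $\Psi$, making the ratio nonpositive, while for $x$ at distance $\geq \rho_0$ from $y$ the denominator is bounded below and the ratio is controlled by $\|\Psi\|_\infty$ -- and concludes $b^*(y)\le L^*$ for a constant $L^*$ independent of $y$. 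This is the hardest step: absent the strict separation \eqref{obst_vs_bdry}, the optimal slope could blow up along $\partial\Omega$ and destroy the Lipschitz bound.

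With the uniform bound in hand, each $\widetilde K_y$ is $L^*$-Lipschitz, hence so is the infimum $K_\infty$. For infinity superharmonicity, each $\widetilde K_y$ is infinity harmonic on $\Omega$ since $-\Delta_\infty|x-y|=0$ at every $x\ne y$ and the vertex $y$ lies on $\partial\Omega$; the pointwise infimum of viscosity supersolutions of $-\Delta_\infty u = 0$ is again a supersolution provided lower semicontinuity is preserved, which it is by virtue of the Lipschitz estimate just obtained. Finally, at any $y\in\partial\Omega$ the two inequalities $K_\infty(y)\le \widetilde K_y(y)=F(y)$ and $K_\infty(y)\ge F(y)$ -- the latter built into the very definition of $\mathcal K$ -- match to give the prescribed boundary trace.
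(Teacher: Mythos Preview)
Your reduction to the subfamily $\{\widetilde K_y\}_{y\in\partial\Omega}$ is appealing, but the identity $K_\infty = \inf_{y}\widetilde K_y$ is \emph{false} in general, and your justification for it (``for a fixed vertex $y$, the value $b_1|x-y|+b_2$ is pointwise minimized by taking $b_2=F(y)$ and $b_1$ as small as admissibility permits'') is where the argument breaks. For a fixed $y$ and a point $x$ far from $y$, it can be cheaper to raise $b_2$ and lower $b_1$: the admissible set in the $(b_1,b_2)$-plane is a half-space intersection, and the linear functional $b_1|x-y|+b_2$ may be minimized at a vertex other than $(b^*(y),F(y))$. A concrete counterexample: take $\Omega=(0,3)$, $F(0)=F(3)=0$, and an obstacle $\Psi$ with two bumps of height $\tfrac12$ at $z=\tfrac12$ and $z=\tfrac52$ (and $\Psi<0$ elsewhere). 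Then $b^*(0)=b^*(3)=1$, so $\widetilde K_0(x)=x$, $\widetilde K_3(x)=3-x$, and $\inf_y\widetilde K_y(\tfrac32)=\tfrac32$. But the constant cone $K_0^{(0,\,1/2)}\equiv\tfrac12$ is admissible, whence $K_\infty(\tfrac32)\le\tfrac12<\tfrac32$. Since both your Lipschitz estimate and your superharmonicity are argued for $\inf_y\widetilde K_y$ and then transferred to $K_\infty$ via the identity, this is a genuine gap.

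The repair is short and keeps most of your work intact. Your uniform bound $b^*(y)\le L^*$ is correct and is the real content; use it as follows. If $K_y^{(b_1,b_2)}\in\mathcal K$ with $b_1>L^*\ge b^*(y)$, then $\widetilde K_y\in\mathcal K$ and $\widetilde K_y(x)=b^*(y)|x-y|+F(y)\le b_1|x-y|+b_2$ for every $x$ (using $b_2\ge F(y)$). Hence the infimum defining $K_\infty$ is unchanged if one restricts to cones with slope $b_1\le L^*$; this restricted family is uniformly $L^*$-Lipschitz, so $K_\infty$ is $L^*$-Lipschitz. Superharmonicity then follows because $K_\infty$ is an infimum over \emph{all} of $\mathcal K$ (every member infinity harmonic in $\Omega$), and you have just secured the continuity needed for the viscosity inequality. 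Your boundary-value argument is unaffected. This is essentially the paper's route --- restrict to bounded parameters, infer the infimum is attained with controlled slope, then run the two-point estimate --- with your explicit bound on $b^*(y)$ supplying the quantitative detail the paper's compactness claim leaves implicit.
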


\begin{proof}
Since we assume that $F$ is Lipschitz, we have that for
every point $y\in \partial \Omega$, there exists a
constant $L$ such that, for every $b_1 >L$ and every $b_2>L$,
$$K_{y}^b (x) \geq F(x) \qquad \mbox{and} \qquad  K_{y}^b (x) \geq
\Psi (x).$$
Hence, when computing the infimum that defines $K_\infty (x)$, we
can restrict to cones with $b=(b_1,b_2)$ in a compact set and
since $y\in \partial
\Omega$ (which is also compact), we conclude that the infimum is
in fact a minimum. This means that, for every $x\in
\overline{\Omega}$, there exists a $y \in \partial \Omega$ and a
$b=(b_1,b_2)$, with $|b_i| \leq L$, depending on $x$, such that
$$K_\infty (x) = K_{y(x)}^{b(x)} (x).$$
From this fact, it follows that $K_\infty$ is Lipschitz continuous in
$\overline{\Omega}$. Let's show why. Take any two points $\hat{x}, \tilde{x} \in \overline{\Omega}$; we have
$$K_\infty (\hat{x}) = K_{y(\hat{x})}^{b(\hat{x})} (\hat{x}) \quad \mathrm{and}
\quad K_\infty (\tilde{x} ) = K_{y(\tilde{x} )}^{b(\tilde{x} )} (\tilde{x}).$$
From the definition, it is clear that $K_\infty (\hat{x})  \leq K_{y(\tilde{x} )}^{b(\tilde{x} )}
 (\hat{x})$ and thus
\begin{eqnarray*}
K_\infty (\hat{x})  - K_\infty (\tilde{x}) & \leq & K_{y(\tilde{x} )}^{b(\tilde{x} )}
(\hat{x}) - K_{y(\tilde{x} )}^{b(\tilde{x} )} (\tilde{x}) \\
& = & b_1 (\tilde{x}) \left( \left| \hat{x} - y(\tilde{x} ) \right| -
\left| \tilde{x} - y(\tilde{x} ) \right| \right)  \\
& \leq & L \left| \hat{x} - \tilde{x} \right|.
\end{eqnarray*}
Reversing the role of $\hat{x}$ and $\tilde{x}$ gives the desired Lipschitz regularity.

Moreover, as the infimum of infinity harmonic functions, $K_\infty$ is in\-fi\-nity superharmonic, \textit{i.e.},
\begin{equation} \label{Kvisc}
-\Delta_\infty K_{\infty} \geq 0\ \  \mbox{in} \ \Omega.
\end{equation}

Finally, by taking $b_1$ large enough and $b_2 = F(y)$, we also have, recalling  \eqref{obst_vs_bdry},
$$ F(y) \leq K_\infty (y) \leq K_{y}^b (y) = F(y)$$
and, hence, $K_\infty (y) = F(y)$, for $y \in \partial \Omega$.
\end{proof}

\begin{theorem}
The limit $u_\infty$ is such that
\begin{equation} \label{fortaleza}
u_\infty (x) \leq K_\infty (x), \quad x \in \overline{\Omega}.
\end{equation}
Equality holds if, and only if, $K_\infty (x)$ is infinity harmonic outside of
its coincidence set $\{ K_\infty = \Psi\}$.
\end{theorem}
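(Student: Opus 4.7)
My plan is to mirror the structure of the proof of Theorem \ref{char1}: first deduce \eqref{fortaleza} from that theorem directly, then prove the equivalence by a comparison argument on the open set where the two functions disagree.

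For the inequality \eqref{fortaleza}, the observation is that Proposition \ref{coimbra} precisely certifies that $K_\infty$ belongs to the class $\mathcal{F}$ introduced in the proof of Theorem \ref{char1}: it is Lipschitz (hence continuous up to the boundary), infinity superharmonic in $\Omega$, equals $F$ on $\partial\Omega$, and by construction dominates $\Psi$ in $\Omega$. Since Theorem \ref{char1} identifies $u_\infty$ with $\inf_{v \in \mathcal{F}} v$, the pointwise bound $u_\infty \leq K_\infty$ is immediate.

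The ``only if'' direction of the equivalence is essentially automatic: if $u_\infty \equiv K_\infty$ on $\overline{\Omega}$, then the coincidence sets $\{u_\infty = \Psi\}$ and $\{K_\infty = \Psi\}$ agree, and the PDE characterization of $u_\infty$ as the viscosity solution of the infinity obstacle problem gives $-\Delta_\infty K_\infty = -\Delta_\infty u_\infty = 0$ in $\{K_\infty > \Psi\}$. For the ``if'' direction, I would repeat the contradiction used to prove Theorem \ref{char1}. Assuming $K_\infty$ is infinity harmonic in $\{K_\infty > \Psi\}$, consider the open set
$$W := \{x \in \Omega : K_\infty(x) > u_\infty(x)\}.$$
On $W$ one has $K_\infty > u_\infty \geq \Psi$, hence $W \subset \{K_\infty > \Psi\}$ and the standing assumption forces $-\Delta_\infty K_\infty = 0$ in $W$. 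On the other hand, $u_\infty$ is infinity superharmonic in $\Omega$, in particular in $W$, and $K_\infty = u_\infty$ along $\partial W \cap \Omega$ by definition, while along $\partial W \cap \partial\Omega$ both functions equal $F$. Applying the comparison principle between the infinity harmonic $K_\infty$ and the infinity superharmonic $u_\infty$ on $W$ yields $K_\infty \leq u_\infty$ in $W$, contradicting the definition of $W$ unless $W = \emptyset$, in which case $K_\infty \leq u_\infty$ throughout $\overline{\Omega}$ and \eqref{fortaleza} becomes an equality.

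The only subtle point I expect is verifying that the comparison principle is legitimately applicable on $W$, which could be an irregular open set whose boundary meets $\partial\Omega$. The ingredients are all present — $K_\infty$ and $u_\infty$ are continuous up to $\overline{\Omega}$, their boundary values on $\partial W$ coincide (either because of the definition of $W$ or because both equal $F$ on $\partial\Omega$), and one is a viscosity solution while the other is a viscosity supersolution of $-\Delta_\infty = 0$ — but the comparison statement for the infinity Laplacian on general open sets with merely continuous boundary data should be invoked carefully, referencing the same comparison result used in the proof of Theorem \ref{char1}.
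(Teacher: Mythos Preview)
Your proposal is correct and follows essentially the same argument as the paper: derive \eqref{fortaleza} from Theorem \ref{char1} via Proposition \ref{coimbra}, note the ``only if'' direction is immediate, and for the ``if'' direction run the same comparison-principle contradiction on $W = \{K_\infty > u_\infty\}$. Your discussion of the boundary behaviour on $\partial W$ is, if anything, slightly more careful than the paper's, which simply asserts $u_\infty = K_\infty$ on $\partial W$ without distinguishing the interior and boundary portions.
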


\begin{proof}
Inequality \eqref{fortaleza} follows immediately from Proposition \ref{coimbra} and Theorem \ref{char1}.
If we have an equality it is also immediate that $K_\infty (x)$ is infinity harmonic
outside of its coincidence set $\{ K_\infty = \Psi\}$ So we are left to prove the other implication.

Arguing by contradiction, assume that
$$W= \{ x \in \Omega :  K_\infty (x) > u_\infty (x) \} \neq
\emptyset. $$
Note that $W$ is open because $u_\infty$ and $K_\infty$ are continuous
functions. Since $W \subset \{K_\infty
> \Psi\}$, we deduce that $-\Delta_\infty K_\infty =0$ in
$W$. But $-\Delta_\infty u_\infty \geq 0$ in $\Omega$ (thus in $W$) and $u_\infty
=K_\infty$ on $\partial W$ so, by the comparison principle for the
infinity Laplacian, we conclude that
$$u_\infty \geq K_\infty \ \ \mbox{in} \ W,$$
a contradiction that shows that $W= \emptyset$ and completes the
proof.
\end{proof}

\begin{remark}
{\rm The condition that $K_\infty (x)$ is infinity harmonic outside of
its coincidence set $\{ K_\infty = \Psi\}$ strongly depends on the
geometry of the problem. In the radial example explicitly computed
in the appendix, the condition holds. However, in general, this is not
the case, as the following example shows. Consider $\Omega$ to be
the union of two disjoints balls connected by a narrow tube of
width $\delta$, an obstacle placed in one of the balls and
boundary data $F=0$. It can be readily checked that, as $\delta
\to 0$, $u_\infty \to 0$ in the ball without obstacle. But
$K_\infty$ is uniformly bounded below inside this ball since the
opening of the corresponding cones is uniformly bounded below (as
these cones have to be above the obstacle).}
\end{remark}

\begin{corollary}
Assume the obstacle $\Psi$ is differentiable and equality holds in
\eqref{fortaleza}. Then $u_\infty$ is differentiable at the free
boundary and
$$Du_\infty (x_0) = D\Psi (x_0), \quad \forall x_0 \in \partial \{ u_\infty = \Psi \}.$$
\end{corollary}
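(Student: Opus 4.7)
The plan is to sandwich $u_\infty$ between the obstacle $\Psi$ (from below) and a selected extremal cone from the family $\mathcal{K}$ (from above) at a free boundary point $x_0$, and to observe that these two smooth touchings force both one-sided Taylor expansions to produce the same gradient. Fix $x_0 \in \partial\{u_\infty = \Psi\}$, so that $u_\infty(x_0) = \Psi(x_0)$; by the separation assumption \eqref{obst_vs_bdry}, the coincidence set is compactly contained in $\Omega$, so $x_0$ is an interior point.

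For the lower bound, the obstacle condition $u_\infty \geq \Psi$ together with equality at $x_0$ and the differentiability of $\Psi$ at $x_0$ yields
$$u_\infty(x_0+h) - u_\infty(x_0) \;\geq\; \Psi(x_0+h) - \Psi(x_0) \;=\; D\Psi(x_0)\cdot h + o(|h|).$$

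For the upper bound, I would invoke the hypothesis $u_\infty \equiv K_\infty$ together with the compactness argument used in the proof of Proposition \ref{coimbra}: the infimum defining $K_\infty(x_0)$ is attained by some cone $K_{y_0}^{b_0}(x) = b_1|x-y_0| + b_2 \in \mathcal{K}$ with $y_0 \in \partial\Omega$. Then $u_\infty \leq K_{y_0}^{b_0}$ throughout $\overline{\Omega}$, with equality at $x_0$. Since $x_0 \in \Omega$ while $y_0 \in \partial\Omega$, we have $x_0 \neq y_0$, so the cone is smooth at $x_0$ and Taylor expansion gives
$$u_\infty(x_0+h) - u_\infty(x_0) \;\leq\; K_{y_0}^{b_0}(x_0+h) - K_{y_0}^{b_0}(x_0) \;=\; q\cdot h + o(|h|),$$
where $q$ denotes the (classical) gradient of $K_{y_0}^{b_0}$ at $x_0$.

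The decisive step — the one that ties the two inequalities together — is the identification $q = D\Psi(x_0)$, and I expect this to be the conceptual heart of the argument, even though the verification is short. It does not require any additional analysis: by definition of $\mathcal{K}$, we have $K_{y_0}^{b_0} \geq \Psi$ in $\Omega$, and both functions agree with $u_\infty(x_0)$ at $x_0$, so $K_{y_0}^{b_0} - \Psi$ attains an interior minimum at $x_0$. Since each function is classically differentiable at $x_0$, the first-order condition forces $D(K_{y_0}^{b_0}-\Psi)(x_0) = 0$, i.e.\ $q = D\Psi(x_0)$. Substituting back, the two Taylor bounds collapse to
$$u_\infty(x_0+h) - u_\infty(x_0) \;=\; D\Psi(x_0)\cdot h + o(|h|),$$
which is precisely the differentiability of $u_\infty$ at $x_0$ together with the identification $Du_\infty(x_0) = D\Psi(x_0)$.
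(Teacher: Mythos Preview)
Your proof is correct and follows essentially the same approach as the paper: sandwich $u_\infty$ between $\Psi$ below and an extremal cone $K_{y_0}^{b_0}$ above, observe that $K_{y_0}^{b_0}-\Psi$ has an interior minimum at $x_0$ forcing the gradients to match, and conclude differentiability of $u_\infty$ from the squeeze. You have simply made explicit a few details the paper leaves implicit (the Taylor expansions, the observation that $x_0\neq y_0$ so the cone is smooth there).
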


\begin{proof}
Let $x_0 \in \partial\{ u_\infty = \Psi \}$. It follows from the
previous results that there exists a cone $K_{y_0}^b$ such that
\begin{equation} \label{alicante}
K_{y_0}^b (x_0) = K_\infty (x_0) = u_\infty (x_0) = \Psi (x_0)
\end{equation}
and
\begin{equation} \label{rambla}
K_{y_0}^b (x) \geq K_\infty (x) = u_\infty (x) \geq \Psi (x), \quad \forall x \in \Omega.
\end{equation}
Hence, $K_{y_0}^b (x)- \Psi (x)$ attains a minimum at $x_0$ and, since it is differentiable,
$$DK_{y_0}^b (x_0) = D \Psi (x_0).$$
From \eqref{alicante} and  \eqref{rambla}, we conclude that $u_\infty$ is also differentiable at $x_0$, with
$$Du_\infty (x_0) = D\Psi (x_0),$$
as claimed.
\end{proof}

\begin{remark}
{\rm As a consequence of this corollary, we conclude that $u_\infty$ is differentiable
everywhere in $\Omega$. In fact, in the interior of the
coincidence set, it coincides with the differentiable obstacle
and, in the interior of the non-coincidence set, it is infinity
harmonic, thus differentiable everywhere by the results of
\cite{ES}.
Also note that the radial solution constructed in the appendix is a $C^1$--solution that
can be characterized by the equality in \eqref{fortaleza}.}
\end{remark}

\medskip

We close this section with the analysis of the behaviour at infinity of the
coincidence sets for the $p$-obstacle problem and relate it with
the coincidence set of the limiting problem. We recall that
$$\limsup_{p \to \infty} A_p =  \bigcap_{p=1}^\infty  \, \bigcup_{n \geq p}
A_n \qquad \mathrm{and} \qquad \liminf_{p \to \infty} A_p =  \bigcup_{p=1}^\infty  \,
\bigcap_{n \geq p} A_n.$$

\begin{theorem} \label{teo.conver.coin.sets}
Assume $\Psi$ is smooth and verifies
$-\Delta_\infty \Psi >0$. Let $A_p=\{ u_p = \Psi\}$ be the coincidence sets of the
$p$-obstacle problems and $A_\infty=\{ u_\infty = \Psi\}$ be the
coincidence set of the limiting problem. Then
\begin{equation}
\overline{\mathrm{int} (A_\infty)} \subset \liminf_{p\to \infty} A_p
\subset \limsup_{p \to \infty} A_p \subset A_\infty.
\end{equation}
\end{theorem}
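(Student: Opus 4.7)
The theorem contains three inclusions. The middle one, $\liminf_{p\to\infty} A_p \subset \limsup_{p\to\infty} A_p$, is immediate from the defining formulas $\bigcup_p \bigcap_{n\geq p} A_n \subset \bigcap_p \bigcup_{n\geq p} A_n$.

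For $\limsup_{p\to\infty} A_p \subset A_\infty$, I would argue by contrapositive. If $x \notin A_\infty$ then $u_\infty(x) > \Psi(x)$, and the local uniform convergence $u_p \to u_\infty$ established in Section 2, together with the continuity of $\Psi$, forces $u_p(x) > \Psi(x)$ for every sufficiently large $p$. Thus $x \notin A_p$ for all $p$ large, so $x \notin \limsup_{p\to\infty} A_p$.

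The nontrivial statement is $\overline{\mathrm{int}(A_\infty)} \subset \liminf_{p\to\infty} A_p$. I would fix $x_0 \in \mathrm{int}(A_\infty)$ and pick $r>0$ with $\overline{B_r(x_0)} \subset A_\infty$, so that $u_\infty \equiv \Psi$ on this ball. Since $u_\infty$ is infinity-superharmonic on $\Omega$, the obstacle itself satisfies $-\Delta_\infty \Psi \geq 0$ on $B_r(x_0)$. From the expansion
\[
-\Delta_p \Psi = -|D\Psi|^{p-2} \Delta \Psi - (p-2)\,|D\Psi|^{p-4} \Delta_\infty \Psi,
\]
valid where $|D\Psi|>0$, one expects $\Psi$ to become a $p$-supersolution on $B_r(x_0)$ for all $p$ large (points where $|D\Psi|$ vanishes are to be handled through the viscosity definition). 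Arguing by contradiction, assume $x_0 \notin A_{p_k}$ along some subsequence $p_k \to \infty$, and let $U_k$ be the connected component of $\{u_{p_k} > \Psi\}$ that contains $x_0$. On $U_k$, $u_{p_k}$ is $p_k$-harmonic; applying the $p_k$-Laplacian comparison principle on $U_k \cap B_r(x_0)$ between the $p_k$-solution $u_{p_k}$ and the $p_k$-supersolution $\Psi$, together with the uniform decay $\sup_{\partial B_r(x_0)}(u_{p_k}-\Psi)\to 0$, should force $u_{p_k} \leq \Psi$ on $U_k$, contradicting the definition of $U_k$.

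The main obstacle I anticipate is exactly the case in which $U_k$ reaches the sphere $\partial B_r(x_0)$: the naive comparison then yields only $u_{p_k}-\Psi \leq \sup_{\partial B_r(x_0)}(u_{p_k}-\Psi)$, which is consistent with $u_{p_k} > \Psi$ rather than contradictory. Ruling this scenario out requires quantitative information, either a non-degeneracy lower bound for the positivity set of $u_{p_k}-\Psi$ in the $p_k$-obstacle problem, or an explicit barrier built from $\Psi$ on $B_r(x_0)$ glued with a $p_k$-harmonic function outside. Finally, to pass from $\mathrm{int}(A_\infty)$ to its closure, I would make the previous step uniform over compact subsets $K \subset \mathrm{int}(A_\infty)$ (so that the threshold $p_K$ depends only on $K$) and invoke the closedness of each $A_p$ to push the conclusion to $\overline{\mathrm{int}(A_\infty)}$.
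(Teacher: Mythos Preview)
Your treatment of the $\limsup$ inclusion is correct and is essentially the paper's argument; the paper phrases it via neighborhoods of $A_\infty$ rather than pointwise, but the content is identical.

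For the $\liminf$ inclusion the paper takes a much shorter route, at the price of an extra hypothesis inserted mid-proof: it assumes $\Psi$ is smooth with $-\Delta_\infty \Psi > 0$ \emph{strictly}. Under this assumption the argument is a one-line contradiction: if $x_0 \in \mathrm{int}(A_\infty)$ but $u_{p_j}(x_0) > \Psi(x_0)$ along a subsequence, then $u_{p_j}$ is $p_j$-harmonic near $x_0$, so passing to the limit yields $-\Delta_\infty u_\infty(x_0) = 0$; since $u_\infty \equiv \Psi$ near $x_0$, this gives $-\Delta_\infty \Psi(x_0) = 0$, contradicting the strict sign. No comparison on balls, no barrier, no boundary analysis is needed. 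The passage to the closure is then obtained simply by noting that $\mathrm{int}(A_\infty)$ sits inside the closed set $\liminf A_p$.

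Your route tries to work without that strict condition, and this is precisely where the gaps appear. First, from $-\Delta_\infty \Psi \geq 0$ alone you cannot conclude that $\Psi$ is a $p$-supersolution for large $p$: in your expansion the term $(p-2)|D\Psi|^{p-4}(-\Delta_\infty\Psi)$ is only nonnegative, while the remaining term $-|D\Psi|^{p-2}\Delta\Psi$ has no sign control and can be strictly negative; you need $-\Delta_\infty \Psi > 0$ (together with $|D\Psi|>0$) for the dominant term to win as $p\to\infty$. Second, even granting the $p$-supersolution property, you correctly identify the real obstruction: when the component $U_k$ meets $\partial B_r(x_0)$ the comparison gives only $u_{p_k}-\Psi \leq \sup_{\partial B_r(x_0)}(u_{p_k}-\Psi)$, which is not a contradiction. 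Closing this would require a $p$-uniform non-degeneracy estimate for the $p$-obstacle problem, and none is available in this setting. In short, the strict sign hypothesis on $\Psi$ is exactly what makes the paper's argument close; your attempt to dispense with it does not go through as written.
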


\begin{proof}
Given a neighborhood $V$ of $A_\infty$, $\Omega \setminus V$ is a
closed set contained in $\{ u_\infty > \Psi\}$. Thus, the
continuity of $u_\infty-\Psi$ gives us a $\eta>0$ such that
$u_\infty-\Psi > \eta$ in $\Omega \setminus V$. Using the uniform
convergence of $u_p$ to $u_\infty$, we conclude that, for $p$
large enough, we also have $u_p - \Psi >
\eta$ in $\Omega \setminus V$. Therefore, we conclude that $\Omega
\setminus V \subset \{ u_p > \Psi\}$ and, consequently, that
$$A_p \subset V,$$
for every large enough $p$. This shows that
$$
\limsup_{p \to \infty} A_p \subset V,
$$
for any neighborhood $V$ of $A_\infty$, and since $A_\infty$ is compact, we also obtain
$$
\limsup_{p \to \infty} A_p \subset A_\infty.
$$

Next, given $x_0 \in \mathrm{int} (A_\infty)$, if we have
$$
u_{p_j} (x_0) > \Psi (x_0),
$$
for a subsequence $p_j \to \infty$, then
$$
- \Delta_{p_j} u_{p_j} (x_0) = 0
$$
and, passing to the limit in the viscosity sense, we conclude that
$$
- \Delta_\infty \Psi (x_0) = - \Delta_\infty u_\infty (x_0) =0,
$$
a contradiction with $ -\Delta_\infty \Psi >0$.
Therefore, we conclude that for every $x_0 \in \mathrm{int} (A_\infty)$,
there exists $p_0=p_0 (x_0)$ such that
$$u_{n} (x_0) = \Psi (x_0),$$
for every $n \geq p_0$. This means that
$$x_0 \in \bigcap_{n \geq p_0} A_n$$
and consequently
$$\mathrm{int} (A_\infty) \subset \liminf_{p\to \infty} A_p.$$
Since the larger set is closed, we also obtain
$$\overline{\mathrm{int} (A_\infty)} \subset \liminf_{p\to \infty} A_p$$
and the proof is complete.
\end{proof}

\section{$\infty$-obstacle type problems and $C^{1,\frac{1}{3}}$--behavior at the free boundary} \label{batfb}

In this section we turn our analysis towards optimal regularity estimates along the free boundary. We shall consider here the zero obstacle type problem that consists in the analysis of a function satisfying:
\begin{eqnarray}
	u \ge 0 &\text{in}& B_1 \label{positive} \\
	\Delta_\infty u = f(x) &\text{in}& \{u > 0\}. \label{Z-PDE}
\end{eqnarray}
Equivalently, we can write the zero obstacle type problem as
\begin{equation}\label{eq Z-O}
	\min\{\Delta_\infty u - f(x), u\} = 0,
\end{equation}
which is understood in the viscosity sense.  We comment that the limiting variational problem studied in section \ref{S. limit p-obs} can be transformed into a zero obstacle-type problem by defining $v = u - \Psi$, under the assumption that $\Delta_\infty v$ is bounded. Thus, the results proven in this section apply to the variational setting, provided this extra assumption is verified.

\par

The ultimate goal  is to show that a solution to \eqref{eq Z-O} grows precisely as
$$
	\left[ \text{dist}(x, \partial \{u> 0\}) \right]^{4/3}
$$
away from the free boundary. Throughout this section, we work under the assumption that $f(x)$ is continuous and bounded away from zero and infinity, \textit{i.e.},
\begin{equation}
 0 < \nu \le f(x) \le M < \infty \label{cond f}
\end{equation}
will be enforced from this point on.
Such a condition is natural in the context of obstacle-type problems and allows us to prove existence and uniqueness for problem \eqref{eq Z-O} by means of a Perron's type method.
\begin{theorem}\label{ex ZO} Given a function $g \in C(\partial B_1)$, with $g>0$, and $f$ satisfying \eqref{cond f}, there exists a unique function $u \in C(\bar{B}_1)$, satisfying
\begin{equation} \label{obst_prob}
	\left \{
		\begin{array}{rll}
			\min\{\Delta_\infty u - f(x), u\} &=& 0  \text{ in } B_1\\
			u & = & g  \text{ on } \partial B_1
		\end{array}
	\right.
\end{equation}
in the viscosity sense. Assuming further that $f$ is uniformly Lipschitz continuous in $B_1$, then $u$ is locally Lipschitz continuous in $B_1$.
\end{theorem}

\begin{proof} The proof of existence goes along the lines of the proof of \cite[Theorem 1]{LW}. Define
\begin{equation}\label{set PM}
	\mathcal{A}_{f,g}^{+} := \left \{ v \in C(\overline{B_1}) \suchthat v\ge 0, \ \Delta_\infty v \le f(x) \text{ in } B_1 , \text{ and } v \ge g \text{ on } \partial B_1 \right \}.
\end{equation}
In the sequel, take
\begin{equation}\label{P sol}
	u(x) := \inf\limits_{v\in \mathcal{A}_{f,g}^{+}} v(x), \quad \text{for } x \in \overline{B_1}.
\end{equation}
Clearly $u \ge 0$ in $B_1$ and $\Delta_\infty u \le f(x)$ in the viscosity sense. It is also an upper-semicontinuous function and thus the set $\{u > 0\}$ is open. Given an open ball $B_\delta$, with $\overline{B_\delta} \subset \{u>0\}$, we can perform the same Perron's argument implemented in the proof of  \cite[Theorem 1]{LW}  to show that $\Delta_\infty u \ge f(x)$ in $B_\delta$. Hence,  the function $u$ defined in \eqref{P sol} does solve the obstacle-type PDE
$$
	\min\{\Delta_\infty u - f(x), u\} = 0  \text{ in } B_1.
$$
Continuity of $u$ up to the boundary follows precisely as in \cite[Theorem 1]{LW} and uniqueness is proven as in \cite[Theorem 3]{LW}.

Let us now turn our attention towards the local Lipschitz regularity of the solution $u$. Locally in $\{u > 0 \}$, $u$ satisfies $\Delta_\infty u \in L^\infty$ in the viscosity sense, thus $u$ is locally Lipschitz continuous in the non-coincidence set (see, for instance \cite[Corollary 2]{L}). Hence, such an estimate needs only to be proven near the free boundary. By continuity of $u$ and the fact that $g >0$ on $\partial B_1$, there exists a small number $\tau_0>0$ such that $u> 0$ in $B_1 \setminus B_{1-\tau_0}$. From our previous argument, there exists a constant $\Sigma > 0$, depending on $M$ and $\tau_0$, such that
\begin{equation}\label{Lip Est P sol}
	 |\nabla u(x) | < \Sigma, \quad \forall x \in  B_{1-\frac{\tau_0}{5}} \setminus B_{1-\frac{\tau_0}{10}}.
\end{equation}
For any vector $\nu$, with $|\nu| < \frac{\tau_0}{100}$, define $\sigma_\nu$ by
$$
	\sigma_\nu^3 :=  \inf\limits_{B_{1- \frac{\tau_0}{100}}} \frac{f(x)}{f(x+\nu)}.
$$
Since $f$ is strictly positive and Lipchitz continuous, it follows that
$$
	|1- \sigma_\nu| + |1- \sigma_\nu^3| \le K_0 |\nu|.
$$
In the sequel, let us label $r_0 := 1-\frac{3}{20}\tau_0$ and define  $u_\nu \colon B_{ r_0} \to \mathbb{R}$ by
$$
	u_\nu(x) := \sigma_\nu \cdot u(x+\nu) + \left (\Sigma + K_0 \sup\limits_{B_1} u\right ) \cdot |\nu|.
$$
We now apply the analysis from the beginning of this proof to the domain $B_{r_0}$. One simply verifies that $u_\nu$ belongs to the set
$$
	\tilde{\mathcal{A}}_{f,g}^{+} :=  \left \{ v \in C(\overline{{B}_{r_0}}) \suchthat v\ge 0, \ \Delta_\infty v \le f(x) \text{ in } B_{r_0} , \text{ and } v \ge u \text{ on } \partial B_{r_0} \right \}.
$$
By uniqueness, $u |_{B_{r_0}}$ is the infimum among all functions in $\tilde{\mathcal{A}}_{f,g}^{+}$. Thus, we can write, for any $x\in B_{r_0}$,
$$
	u_\nu(x) \ge u(x),
$$
which immediately yields
$$
	u(x+\nu) - u(x) \ge - \left (\Sigma + 2 K_0 \sup\limits_{B_1} u\right ) \cdot |\nu|
$$
and the local Lipschitz estimate for $u$ follows.
\end{proof}

We remark that, assuming only the boundedness of $f(x)$, the local Lipschitz continuity of the solution to the infinity obstacle problem is a consequence of the next lemma.

\begin{lemma} \label{bddinflap}
Let \eqref{cond f} be in force and let $u$ be the viscosity solution to the obstacle problem \eqref{obst_prob}. Then
$$
	|\Delta_\infty u| \le M.
$$
\end{lemma}
\begin{proof} The idea of the proof is to perform a singular approximation of the obstacle problem. Let $\zeta$ be a nonnegative real $C^1$ function satisfying $\supp \, \zeta = [0,1]$ and $\int \zeta(t) dt = 1$. For each $\epsilon > 0$, consider the boundary value problem
\begin{equation}\label{App OP}
	\left \{
		\begin{array}{rll}
			 \Delta_\infty u_\epsilon &=&  f(x) \cdot \displaystyle \int_0^{u_\epsilon/\epsilon} \zeta(t) \, dt  \ \text{ in } B_1\\
			 & & \\
			u_\epsilon  & = & g  \text{ on } \partial B_1.
		\end{array}
	\right.
\end{equation}
Notice that the reaction term
$$
	 f(x) \cdot \displaystyle \int_0^{u_\epsilon/\epsilon} \zeta(t) \, dt =:\beta(x,u_\epsilon),
$$
is monotone non-decreasing with respect to $u_\epsilon$. Hence, as before, by means of a Perron's type method (see \cite{BM, BM2}), the Dirichlet problem \eqref{App OP} can be uniquely solved. Clearly,
$$
	| \Delta_\infty u_\epsilon | \le M.
$$
Thus, it follows from Lipschitz estimates and uniform continuity up to the boundary (cf., for example, \cite[Corollary 2]{L}), that the family $\{u_\epsilon\}_{\epsilon > 0}$ is equicontinuous in $B_1$. Up to a subsequence, $u_\epsilon$ converges uniformly to a function $v$. The limiting function $v$ is nonnegative, agrees with $g$ on the boundary, and satisfies $|\Delta_\infty v|  \le M$, in the viscosity sense. In particular, $v$ is locally Lipschitz continuous in $B_1$. Now, given a point $z \in \{v > 0 \} \cap B_1$, by the triangular inequality, one easily checks that
$$
	B := B_{\frac{v(z)}{2L}}(z) \subset \left \{ v > \frac{v(z)}{2} > 0 \right  \},
$$
where $L$ is the Lipschitz norm of $v$ on $B_{1-|z|}$.  In particular
$$
	\Delta_\infty u_\epsilon = f(x) \text{ in } B,
$$
for all $\epsilon < \frac{v(z)}{2}$. By stability, we deduce that
$\Delta_\infty v = f(x)$  in  $B$ as well. Since $z \in \{v > 0 \}$ was taken arbitrary, it follows that $v$ satisfies $\Delta_\infty v = f(x)$ in $\{v > 0 \}$. We have verified that $v$ solves the same boundary value problem as $u$. Thus, by uniqueness, $u = v$ and the lemma is proven.
\end{proof}

As commented earlier, it remains unknown, up to now, whether a generic infinity harmonic functions is more regular than differentiable. Hence the gradient estimate given by Theorem \ref{ex ZO} is the best we can reach at this point. Surprisingly enough, at the free boundary, there is more.  We are now ready for our main result, which gives the optimal $C^{1, \frac{1}{3}}$-regularity estimate for solutions of the infinity obstacle problem along the free boundary.

\begin{theorem}[Sharp $C^{1,\frac{1}{3}}$--regularity at the free boundary]  Let $u$ be a solution to \eqref{eq Z-O} and  $x_0 \in \partial \{u  > 0 \}$ be a generic free boundary point. Then
\begin{equation}\label{OG}
	\sup\limits_{y \in B_r(x_0)} u(y)  \le C\, r^{4/3},
\end{equation}
for a constant $C$ that depends only upon the data of the problem.
\label{thm OG}
\end{theorem}

\begin{proof} For simplicity, and without loss of generality, assume $x_0 = 0$. By combining discrete iterative techniques and a continuous reasoning (see, for instance, \cite{CKS}), it is well established that proving estimate \eqref{OG} is equivalent to verifying the existence of a constant $C>0$, such that
\begin{equation} \label{OG-eq01}
	\mathfrak{s}_{j+1} \le \max \left \{C\, 2^{- {4}/{3} \cdot (j+1)}  , \ 2^{-4/3} \mathfrak{s}_j \right \}, \quad \forall \, j \in \mathbb{N},
\end{equation}
where
$$
	\mathfrak{s}_j = \sup\limits_{B_{2^{-j}}} u.
$$

Let us suppose, for the sake of contradiction, that \eqref{OG-eq01} fails to hold, \textit{i.e.}, that for each $k \in \mathbb{N}$, there exists $j_k \in \mathbb{N}$ such that
\begin{equation}
\mathfrak{s}_{j_k+1} > \max \left \{k\, 2^{-{4}/{3}\cdot (j_k+1)} , \ 2^{-4/3}  \mathfrak{s}_{j_k} \right \}.
\label{contra}
\end{equation}
Now, for each $k$, define the rescaled function $v_k \colon B_1 \to \mathbb{R}$ by
$$	
	v_k(x) := \frac{u(2^{-j_k}x)}{\mathfrak{s}_{j_k+1}}.
$$	
One easily verifies that
\begin{equation}
0 \le v_k(x) \le \sqrt[3]{16}, \quad \forall x\in B_1;
\label{OG-eq03}
\end{equation}	
\begin{equation}
v_k(0) = 0;
\label{OG-eq04}
\end{equation}	
\begin{equation}
\sup\limits_{B_{\frac{1}{2}}} v_k = 1. \label{OG-eq06}
\end{equation}	
Moreover, we formally have
\begin{eqnarray*}
\Delta_\infty v_k (x) & = & \frac{2^{-j_k}}{\mathfrak{s}_{j_k+1}} D u (2^{-j_k}x) \cdot \left( \frac{2^{-2 j_k}}{\mathfrak{s}_{j_k+1}} D^2 u (2^{-j_k}x)  \right) \cdot \frac{2^{-j_k}}{\mathfrak{s}_{j_k+1}} D u (2^{-j_k}x)\\
 & = & \frac{2^{-4j_k}}{\mathfrak{s}^3_{j_k+1}} \Delta_\infty u (2^{-j_k}x) =:f_k.
\end{eqnarray*}
It is a matter of routine to rigorously justify the above calculations using the language of viscosity solutions (see, \textit{e.g.}, \cite[section 2]{T}). We estimate
\begin{equation}
|f_k|  \le \frac{2^{-4j_k}}{2^{-4(j_k+1)} \, k^3} \, M = \frac{16M}{k^3} \leq 16M,
\label{OG-eq05}
\end{equation}	
using Lemma \ref{bddinflap} and \eqref{contra}.

Combining the uniform bounds \eqref{OG-eq03}, \eqref{OG-eq05}, and local Lipschitz regularity results for the inhomogeneous infinity Laplace equation (cf., for example, \cite[Corollary 2]{L}), we obtain both the equiboundedness and the equicontinuity of the sequence $(v_k)_k$. By Ascoli's theorem, and passing to a subsequence if need be, we conclude that $v_k$ converges locally uniformly to a infinity harmonic function $v_\infty$ in $B_1$ (observe that $f_k \rightarrow 0$) such that
$$	0\le v_\infty \le \sqrt[3]{16} \quad \text{and} \quad v_\infty(0) = 0.$$
We now use Harnack's inequality for infinity harmonic functions (see \cite[Corollary 2]{LM}) to obtain the bound
$$v_\infty (x) \leq e^{2|x|} \, v_\infty (0) = 0, \quad \forall \, x \in B_{1/2}.$$
It follows that $v_\infty \equiv 0$ in $B_{1/2}$, which contradicts \eqref{OG-eq06}. The theorem is proven.
\end{proof}

As a first consequence we improve the local Lipschitz regularity estimate provided by Theorem \ref{ex ZO}, where $f$ needs only to satisfy \eqref{cond f}. Indeed we obtain a finer gradient control near the free boundary.

\begin{corollary}\label{Lip} Let $u$ be a solution to \eqref{eq Z-O} in $B_1$. Then $u$ is locally Lipschitz continuous and for any point $z \in \{u>0\} \cap B_1$, there holds
$$
	|\nabla u(z)| \le C \dist(z, \partial \{u>0\})^{1/3}.
$$

\end{corollary}
\begin{proof}
Fix $z \in \{u> 0\} \cap B_{1/2}$ and label $d := \dist (z, \partial \{u>0\})$. Let $\zeta \in \partial \{u>0\}$ be a free boundary point satisfying
$$
	|\zeta - z| = d.
$$
From the $C^{1, \frac{1}{3}}$-smoothness of $u$ at $\zeta$, we know
\begin{equation}\label{Lip est01}
	 \sup\limits_{B_d(z)} u \le \sup\limits_{B_{2d}(\zeta)} u \le C \cdot d^{4/3}.
\end{equation}
We now define the auxiliary function $v\colon B_1 \to \mathbb{R}_{+}$, by
$$
	v(x) := \dfrac{u(z+ dx)}{d^{4/3}}.
$$
As argued before, $v$ satisfies
\begin{equation}\label{Lip est02}
	\Delta_\infty v = f(z+ dx), \quad \text{ in } B_1.
\end{equation}
From \eqref{Lip est01} we can estimate
\begin{equation}\label{Lip est03}
	\sup\limits_{B_{1}} v \le C.
\end{equation}
Finally, applying the gradient estimate for bounded solutions to \eqref{Lip est02}, we conclude
$$
	|\nabla v(0)| = d^{-1/3} |\nabla u(z)| \le C_2,
$$
and the Corollary is proven.
\end{proof}

Our next theorem establishes a $C^{1,\frac{1}{3}}$--estimate from below, which implies that $u$ leaves the zero-obstacle trapped by the graph of two functions of the order $\text{dist}^{4/3}(x, \partial \{u > 0\})$.

\begin{theorem} Let $u$ be a viscosity solution to \eqref{eq Z-O} and $y_0 \in \overline{\{u > 0\}}$ be a generic point in the closure of the non-coincidence set. Then
$$
	\sup\limits_{B_r(y_0)} u \ge c\, r^{4/3},
$$
for a constant $c>0$ that depends only upon $\nu$.
\label{thm ND}
\end{theorem}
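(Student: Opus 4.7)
The natural route is a direct barrier comparison that exploits the non-degeneracy hypothesis \eqref{ND}. Write $v := u_\infty - \Psi$ and introduce the radial cusp centered at $y_0$,
\[
  \psi(x) := c\,|x-y_0|^{4/3}, \qquad c := \Bigl(\tfrac{81\,\nu}{64}\Bigr)^{1/3},
\]
which, by the radial $p\to\infty$ limit of \eqref{rad sym p-laplace}, satisfies $\Delta_\infty \psi \equiv \nu$ classically on $\mathbb{R}^d\setminus\{y_0\}$. Any $C^2$ test function touching $\psi$ from below at the vertex $y_0$ must have vanishing gradient there (since $\psi$ is differentiable at $y_0$ with $D\psi(y_0)=0$ while growing as $|x-y_0|^{4/3}$), so $\Delta_\infty$ of any such test function vanishes at $y_0$ and is trivially $\le \nu$. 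Consequently $\psi$ is a viscosity supersolution of $\Delta_\infty u = \nu$ on the entire ball $B_r(y_0)$.

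A preliminary reduction handles the closure case: since $v$ is continuous, $y \mapsto \sup_{B_r(y)} v$ is continuous in $y$, so the estimate for $y_0 \in \overline{\{u_\infty>\Psi\}}$ follows by passing to the limit from interior points. I would therefore assume $v(y_0) > 0$ and argue by contradiction, supposing $\sup_{B_r(y_0)} v < c\,r^{4/3}$. Consider the open set $U := B_r(y_0) \cap \{v>0\}$, which contains $y_0$. By \eqref{ND}, $v$ is a viscosity subsolution of $\Delta_\infty u = \nu$ on $U$. On $\partial U \cap \partial B_r(y_0)$ the contradiction hypothesis yields $v < c\,r^{4/3} = \psi$, while on $\partial U \cap \partial\{v>0\}$ one has $v=0\le\psi$ by the continuity of $v$; thus $v\le\psi$ on $\partial U$. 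Applying the comparison principle for $v$ (sub) and $\psi$ (super) of the inhomogeneous infinity Laplace equation on $U$ yields $v \le \psi$ throughout $U$. Evaluating at $y_0$ gives the contradiction $0 < v(y_0) \le \psi(y_0) = 0$, so the estimate holds with the explicit constant $c = (81\nu/64)^{1/3}$, as desired.

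The main obstacle I foresee is invoking comparison on the possibly rough set $U$: its boundary contains an a priori irregular piece of the free boundary $\partial\{v>0\}$. This is overcome by the standard viscosity comparison principle for $\Delta_\infty u = f(x)$, with continuous $f$, on any bounded open subset of $\mathbb{R}^d$, which requires no boundary regularity; the same comparison principle is already implicit in the arguments of Section \ref{sec-charac} leading to Theorem \ref{char1} and to the characterization by cones.
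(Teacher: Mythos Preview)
Your proof is correct and follows essentially the same barrier argument as the paper: both use the radial cusp $c\,|x-y_0|^{4/3}$ with $c=\tfrac{3}{4}\sqrt[3]{3\nu}=(81\nu/64)^{1/3}$ as a supersolution of $\Delta_\infty u=\nu$, compare it with $v=u_\infty-\Psi$ on $B_r(y_0)\cap\{v>0\}$, and obtain the contradiction $0<v(y_0)\le 0$ at the center. Your write-up is in fact a bit more careful than the paper's, since you explicitly verify the supersolution property at the non-smooth vertex and address comparison on the possibly irregular domain $U$.
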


\begin{proof} By continuity arguments, it is enough to prove the result for points in the non-coincidence set. For simplicity, and without loss of generality, take $y_0 =0$. Define the barrier
$$
	\mathcal{B}_\infty(x) := \frac{3}{4} \sqrt[3]{3 \nu}\, |x|^{4/3},
$$	
which satisfies, by direct computation,
$$
	\Delta_\infty \mathcal{B}_\infty = \nu.
$$
Hence,
$$
    \Delta_\infty  u = f(x) \ge \nu = \Delta_\infty \mathcal{B}_\infty, \quad \text{in } \{u > 0\},
$$
in the viscosity sense. On the other hand,
$$
  u \equiv 0 < \mathcal{B}_\infty \quad \text{on } \partial \{u >  0  \} \cap B_r.
$$
Therefore, for some point $ y^\star \in \partial B_r \cap \{u > 0 \}$, there must hold
\begin{equation}\label{ND eq05}
    u(y^\star) >  \mathcal{B}_\infty(y^\star);
\end{equation}
otherwise, by Jensen's comparison principle for infinity harmonic functions \cite{J}, we would have, in particular,
$$
    0< u(0) \leq \mathcal{B}_\infty(0) = 0.
$$
Estimate \eqref{ND eq05} implies the thesis of the theorem.
 \end{proof}

As usual, as soon as we establish the precise sharp asymptotic behavior for a given free boundary problem, it becomes possible to obtain certain weak geometric properties of the phases. We conclude this section by proving that the region where the membrane is above the obstacle has uniform positive density along the free boundary, which is then inhibited to develop cusps pointing inwards to the coincidence set.

\begin{corollary}\label{cor meas} Let $u$ be a solution to \eqref{eq Z-O} and $x_0 \in \partial \{u >  0 \}$ be a free boundary point. Then
$$
    \Leb\left (B_\rho(x_0) \cap \{u >  0 \} \right ) \ge \delta_\star \rho^n,
$$
for a constant $\delta_\star > 0$ that depends only upon the data of the problem.
\end{corollary}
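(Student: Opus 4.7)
The plan is to combine the upper bound of Theorem \ref{thm OG} with the lower bound of Theorem \ref{thm ND} in a standard non-degeneracy argument. Since $x_0 \in \partial \{u_\infty > \Psi\} \subset \overline{\{u_\infty > \Psi\}}$, Theorem \ref{thm ND} applies at $x_0$, and together with $u_\infty \ge \Psi$ we get, for every $r > 0$ small,
$$
\sup_{\overline{B_{r}(x_0)}} (u_\infty - \Psi) \;\ge\; c\, r^{4/3}.
$$
By continuity, this supremum is attained at some $y^\star \in \overline{B_{r}(x_0)}$, and necessarily $y^\star \in \{u_\infty > \Psi\}$.

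Next I would show that a definite portion of $B_{r}(x_0)$ around $y^\star$ sits inside the non-coincidence set. Let $z \in \partial \{u_\infty > \Psi\}$ be a point realizing $d := \dist(y^\star, \partial\{u_\infty > \Psi\}) = |y^\star - z|$. A standard connectedness argument (any straight segment from $y^\star$ to a putative point outside $\{u_\infty > \Psi\}$ would cross the free boundary closer than $d$) shows $B_d(y^\star) \subset \{u_\infty > \Psi\}$. Applying Theorem \ref{thm OG} at the free boundary point $z$ yields
$$
c\, r^{4/3} \;\le\; (u_\infty - \Psi)(y^\star) \;\le\; \sup_{B_d(z)} (u_\infty - \Psi) \;\le\; C\, d^{4/3},
$$
so $d \ge \sigma r$, with $\sigma := (c/C)^{3/4} > 0$ a universal constant.

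To close, I choose $r = \rho/2$ (for $\rho$ small enough that both theorems apply). Then $y^\star \in \overline{B_{\rho/2}(x_0)}$ and $B_{\sigma \rho/2}(y^\star) \subset \{u_\infty > \Psi\}$; shrinking $\sigma$ if necessary so that $\sigma \le 1$, the triangle inequality gives $B_{\sigma \rho/2}(y^\star) \subset B_\rho(x_0)$. Therefore
$$
\Leb\!\bigl(B_\rho(x_0) \cap \{u_\infty > \Psi\}\bigr) \;\ge\; \Leb\!\bigl(B_{\sigma \rho/2}(y^\star)\bigr) \;=\; \omega_n \Bigl(\tfrac{\sigma}{2}\Bigr)^{\!n} \rho^n \;=:\; \delta_\star\, \rho^n,
$$
which is the desired estimate.

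The only mildly delicate point is verifying $B_d(y^\star) \subset \{u_\infty > \Psi\}$ and keeping track of the dependence of $\sigma$ only on the data (via $\nu$ in $c$ and via $M$ and $\|\Psi\|_{C^{1,1}}$ in $C$); everything else is bookkeeping. No further obstacle-type technique is needed — the corollary is a direct geometric consequence of having matching two-sided $4/3$-growth at the free boundary.
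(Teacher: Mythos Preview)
Your argument is correct and follows essentially the same route as the paper: combine the non-degeneracy estimate of Theorem~\ref{thm ND} with the upper growth bound of Theorem~\ref{thm OG} to produce a ball of radius comparable to $\rho$ contained in the non-coincidence set. The only cosmetic differences are that the paper argues by contradiction (assuming a free boundary point lies in $B_{\lambda\rho}(z)$) rather than invoking the nearest free boundary point directly, and that it places the distinguished point on $\partial B_\rho(x_0)$ and then estimates $\Leb\bigl(B_\rho(x_0)\cap B_{\lambda\rho}(z)\bigr)$, whereas you work at scale $r=\rho/2$ so that the small ball sits entirely inside $B_\rho(x_0)$; neither variation changes the substance of the proof.
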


\begin{proof}
It follows from Theorem \ref{thm ND} that there exists a point
$$
	z \in \partial B_{\rho}(x_0) \cap \{u  >  0 \}
$$
such that
$ u(z) \ge c\, \rho^{4/3}$. By $C^{1,\frac{1}{3}}$--bounds along the free boundary, Theorem \ref{thm OG},  it
follows that
$$
	B_{\lambda \rho}(z) \subset \{u > 0\},
$$
where the constant
$$
	\lambda :=  \sqrt[4]{\left(\frac{c}{2C} \right)^3}
$$
depends only on the data of the problem.
In fact, if this were not true, there would exist a free boundary point $y \in B_{\lambda \rho}(z)$. From \eqref{OG}, we would reach
$$
	c\, \rho^{4/3} \leq u(z) \leq \sup\limits_{B_{\lambda \rho}(y)} u \le C\, (\lambda \rho)^{4/3} = \frac{1}{2} c\, \rho^{4/3},
$$
which is a contradiction. Thus,
    $$
        B_\rho(x_0) \cap B_{\lambda \rho}(z)   \subset B_\rho(x_0) \cap \{u> 0 \}
    $$
and, finally,
    $$
        \Leb\left (B_\rho(x_0) \cap \{u > 0 \} \right ) \ge
        \Leb \left (  B_\rho(x_0) \cap B_{\lambda \rho}(z) \right ) \ge \delta_\star \rho^n,
    $$
and the corollary is proven.
\end{proof}

We conclude by remarking that the thesis of Corollary \ref{cor meas} implies that the free boundary $\partial \{u>0\}$ is porous, with porosity constant $\tau>0$ that depends only on the data of the problem. In particular, the Hausdorff dimension of the free boundary is strictly less than $n$ and hence it has Lebesgue measure zero. 

 \begin{center}
{\sc Appendix: A radial explicit example}
 \end{center}

\medskip

In this appendix we construct a radially symmetric explicit solution to a (variational) obstacle problem, by means of a limiting process, namely,  taking $p \to \infty$.
For that, let us consider the $p$-obstacle problem in $B_2 \subset
\mathbb{R}^d$, with zero boundary data and the spherical cap
$\psi(x) = 1- |x|^2$ as the obstacle. It is formulated as the following minimization problem:
$$
    \text{Min } \left \{ \int_{B_2} |D v(x)|^p dx \suchthat
    v\in W^{1,p}_0(B_2) \text{ and } v(x) \ge \psi(x) \right \}.
$$

As mentioned in section \ref{S. limit p-obs},  the problem admits a unique
minimizer $u_p$. By symmetry, we conclude $u_p$ is radially
symmetric, \textit{i.e.}, $u_p(x) = u_p(|x|)$. By the geometry of the
obstacle problem, as well as its regularity theory, we know that
there exists an $h = h(p,d)$, that depends on $p$ and dimension,
such that
$$
    \left \{ \begin{array}{lll}
        u_p(x) =  \psi (x) &\text{ in } & |x|\le h, \\
        \Delta_p u_p = 0  &\text{ in } & 2> |x| > h, \\
        u_p \in C^{1,\alpha_p}   &\text{ in } &  B_2, \\
        \|D u_p \|_{L^\infty(B_\rho)} \le C(\rho,d),
    \end{array} \right.
$$
for a constant $C(\rho, d)$, which is independent of $p$. In particular, as
observed in the main text, up to a subsequence, $u_p$ converges locally
uniformly to a function $u_\infty$. Furthermore, $u_\infty$ solves
$\Delta_\infty u_\infty = 0$ within $\{u_\infty > \psi \}$ in the
viscosity sense.

Our goal is to solve the $p$-obstacle problem explicitly and then
analyze the limiting function $u_\infty$.  We are initially led to search for $p$-harmonic
radially symmetric functions. If $g(x) = f(r)$, then
$$
    \Delta_p g = |f'(r)|^{p-2} \left \{(p-1) f''(r) + \dfrac{d-1}{r} f'(r) \right \}.
$$
Solving the homogeneous ODE, we obtain
$$
    f(r) = \left \{ \begin{array}{lll}
        a + b \cdot r^{\frac{1-d}{p-1} + 1} &\text{if}&  p \not = d, \\
        a + b \cdot \ln r  &\text{if}&  p = d,
    \end{array} \right.
$$
for any constants $a, b \in \mathbb{R}$.  Returning to the
obstacle problem (we will only deal with the case, $p \not = d >
1$, as we are interested in the limiting problem as $p \to
\infty$), by regularity considerations, we end up with the
following system of equations:
\begin{equation}
a + b \cdot h^{-\alpha + 1}  = 1 - h^2  \quad \text{and} \quad  b  \cdot (-\alpha+1) h^{\alpha} = - 2h, \tag{1} \label{1}
\end{equation}
where the exponent $\alpha = \alpha (p)$ is given by
\begin{equation}
    \alpha (p)= \frac{d-1}{p-1} \longrightarrow 0 \quad \textrm{as} \quad p \to \infty. \tag{2} \label{2}
\end{equation}
The first equation in \eqref{1} comes from
continuity and the second from $C^1$--estimates. By the boundary
condition, we have
$$
    a+ b \cdot 2^{-\alpha +1} =0.
$$
Subtracting the first equality from the above equation, we obtain
$$
    b \cdot (2^{-\alpha +1} - h^{-\alpha +1}) = -1 + h^2,
$$
which simplifies out to
$$
(-\alpha +1) b \cdot h^{-\alpha} = -2 h.
$$
Combining the above with the second equation in \eqref{1}, we end up with
$$
\frac{2}{1-\alpha} (2^{-\alpha +1}h^{1+\alpha } - h^2) = 1-h^2,
$$
that is,
$$
\left(\frac{2}{1-\alpha} -1\right) h^2 - 4 \left(\frac{2^{-\alpha}}{1-\alpha} \right)
h^{1+\alpha} +1 =0.
$$
Now, we observe that, from \eqref{2}, this equation
converges to $h^2 - 4  h +1 =0$, which has as solution in $(0,1)$ (the free boundary must lie
in this interval) $h_\infty = 2-\sqrt{3}$.
With this limit, we can also compute the limit of
$$
f_p (r) = a_p + b_p r^{- \frac{d-1}{p-1} +1} = a_p + b_p
r^{-\alpha (p) +1}
$$
that is given by
$$
    f_\infty (r) = a_\infty + b_\infty r,
$$
with
$a_\infty = 4 h_\infty$ and $b_\infty = - 2 h_\infty$.
Note that $f_\infty (r) $ is infinity harmonic in $B_2 \setminus
B_{h_\infty}$ and verifies
$$
    f_\infty (h_\infty) = 1- h_\infty^2 \qquad \textrm{and} \qquad f_\infty ' (h_\infty) = -2 h_\infty.
$$
It is the solution of the limit obstacle problem.

\medskip

To conclude, it might be interesting to observe that  the solution constructed here behaves linearly along the free boundary -- and not as a $C^{1, 1/3}$ graph. This fact elucidates as to why condition \eqref{cond f} ought to be enforced so that solutions do leave the obstacle precisely as $\text{dist}^{4/3}$.
\bigskip

\noindent \textbf{Acknowledgments}. {\small JDR partially supported by DGICYT grant PB94-0153 MICINN, Spain. ET partially supported by CNPq-Brazil. JMU partially supported by FCT projects PTDC/MAT/098060/2008, UTAustin/MAT/0035/2008, UTA-CMU/MAT/0007/2009 and PTDC/MAT-CAL/0749/2012, and by CMUC, funded by the European Regional Development Fund through the program COMPETE and by the Portuguese Government through FCT under the project PEst-C/MAT/UI0324/2011.}

\medskip

\medskip

{\scriptsize

\noindent {\sc Julio D. Rossi\\ Department of Mathematical Analysis, University of Alicante\\ 03080 Alicante, Spain.}

\noindent \textit{E-mail address:}  {\tt julio.rossi@ua.es}

\medskip

\noindent {\sc Eduardo V. Teixeira\\ Universidade Federal do Cear{\'a}\\ Campus of Pici - Bloco 914, Fortaleza - Cear{\'a} - 60.455-760, Brazil.}

\noindent \textit{E-mail address:} {\tt teixeira@mat.ufc.br}

\medskip

\noindent {\sc Jos{\'e} Miguel Urbano\\ CMUC, Department of Mathematics, University of Coimbra\\ 3001-501 Coimbra, Portugal.}

\noindent \textit{E-mail address:}  {\tt jmurb@mat.uc.pt}

}

\end{document}